\theoremstyle{plain}
\newtheorem{thm}{Theorem}[section]
\newtheorem{prop}[thm]{Proposition}
\newtheorem{cor}[thm]{Corollary}
\theoremstyle{definition}
\newtheorem{eg}[thm]{Example}
\theoremstyle{remark}
\newtheorem{rmk}[thm]{Remark}
\def\Z{{\mathbf Z}}
\def\C{{\mathbf C}}
\def\cO{\mathcal{O}}
\def\.{\cdot}
\def\^{\widehat}
\def\({\left(}
\def\){\right)}
\newcommand{\llbracket}{[\negthinspace[}
\newcommand{\rrbracket}{]\negthinspace]}
\renewcommand{\and}{ \ \ \text{ and } \ \ }
\newcommand{\factor}[2]{\left. \raise 2pt\hbox{$#1$} \right/\hskip -2pt\raise -2pt\hbox{$#2$}}
\DeclareMathOperator{\codim} {codim}
\DeclareMathOperator{\Cont} {Cont}
\begin{document}

\author[D.~Bath]{Daniel Bath}
\address{KU Leuven, Departement Wiskunde, Celestijnenlaan 200B, Leuven
3001, Belgium}
\email{dan.bath@kuleuven.be}

\author[M.~Musta\c{t}\u{a}]{Mircea Musta\c{t}\u{a}}

\address{Department of Mathematics, University of Michigan, 530 Church Street, Ann Arbor, MI 48109, USA}

\email{mmustata@umich.edu}

\thanks{The first author was supported by FWO grant \#1282226N. The second author was partially supported by NSF grant DMS-2301463 and by the Simons Collaboration grant \emph{Moduli of
Varieties}}

\subjclass[2020]{14B05, 14J17, 32S25}

\begin{abstract}
Given a closed subscheme $Z$ in a smooth variety $X$, defined by the maximal minors of an $s\times r$ matrix of regular functions, with $s\geq r$, we 
consider the corresponding incidence correspondence $W$ in $Y=X\times {\mathbf P}^{r-1}$, and relate the log canonical thresholds of
$(X,Z)$ and $(Y,W)$. In particular, when $r=s$, we show that ${\rm lct}(X,Z)=1$ if and only if ${\rm lct}(Y,W)=r$. Moreover, in this case, we show that $Z$ has rational singularities
if and only if $W$ has pure codimension $r$ in $Y$ and has rational singularities. As a consequence, we deduce that for a configuration hypersurface with a connected configuration matroid,
the corresponding configuration incidence variety has rational singularities.
\end{abstract}

\title[On singularities of determinantal hypersurfaces]{On singularities of determinantal hypersurfaces}

\maketitle

\section{Introduction}

Let $X$ be a smooth, irreducible, complex algebraic variety, and let $A=(a_{ij})_{1\leq i\leq s,1\leq j\leq r}$ be a matrix with $s\geq r$ and $a_{ij}\in\cO_X(X)$ for all $i$ and $j$.
We consider the closed subscheme  $Z_A$ of $X$ defined by the ideal generated by the $r$-minors of $A$, which we assume is a proper subscheme. We also consider 
the incidence correspondence $W_A\hookrightarrow Y=X\times {\mathbf P}^{r-1}$, defined by $\big\{\sum_{1\leq j\leq r}a_{ij}y_j\mid 1\leq i\leq s\big\}$, where
$y_1,\ldots,y_r$ are the homogeneous coordinates on ${\mathbf P}^{r-1}$.
Our main interest is in the case of a square matrix $A$, when $Z_A$ is a hypersurface in $X$, but our first main result holds in the general setting. 
Our goal is to relate the singularities of the pairs $(X,Z_A)$ and $(Y,W_A)$. 

Recall that if $\Gamma$ is a closed subscheme of a smooth variety $T$, then we may consider the \emph{log canonical threshold} ${\rm lct}(T,\Gamma)$, which
can be defined in terms of a log resolution of the pair $(T,\Gamma)$ (see \cite[Chapter~9.3.B]{Lazarsfeld} for an introduction to log canonical thresholds). This is a fundamental invariant
of singularities of pairs, which can be defined in a more general setting (only assuming that the pair $(T,\Gamma)$ has mild singularities), and which plays an important role
in birational geometry.
It is well-known and easy to see
that if $\Gamma$ is locally defined by $d$ equations, then ${\rm lct}(T,\Gamma)\leq d$. Moreover,
if equality holds, then $\Gamma$ is a local complete intersection of pure codimension $d$ in $T$.
In the setting of interest for us, we prove the following result relating the log canonical thresholds of $(X,Z_A)$ and $(Y,W_A)$:

\begin{thm}\label{thm1_main}
With the above notation, the following hold:
\begin{enumerate}
\item[i)] If ${\rm lct}(X,Z_A)\geq c$, then ${\rm lct}(Y,W_A)\geq \min\{rc,r-1+c\}$.
\item[ii)] If ${\rm lct}(Y,W_A)\geq r-1+c'$, then ${\rm lct}(X,Z_A)\geq \min\big\{c',\tfrac{c'-1}{r}+1\big\}$.
\end{enumerate}
\end{thm}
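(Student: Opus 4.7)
The plan is a local computation via an adapted log resolution that yields the two bounds in the $\min$ from two different kinds of divisor contributions.

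Since log canonical thresholds are local, I fix $(x_0,[v_0])\in W_A$ and aim to bound $\lct_{(x_0,[v_0])}(Y,W_A)$. After a linear change of $y$-coordinates I may assume $[v_0]=[1{:}0{:}\cdots{:}0]$ and work in the affine chart $U=\{y_1\ne 0\}\cong \A^{r-1}$ with $z_j=y_j/y_1$; there,
\[
\cI_{W_A}|_{X\times U}=(g_1,\ldots,g_s),\qquad g_i=a_{i1}+\sum_{j=2}^r a_{ij}\,z_j.
\]

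Next I choose a log resolution $\mu\colon X'\to X$ that simultaneously principalizes both $\cI_{Z_A}$ and the ideal $\mathfrak{J}=(a_{ij})$ of all entries of $A$, with SNC exceptional divisor. For each prime exceptional $E$, set $k=v_E(K_{X'/X})$, $e=v_E(\mathfrak{J})$, $d=v_E(\cI_{Z_A})$. The key algebraic inequality $d\ge re$ follows because each $r$-minor is a homogeneous polynomial of degree $r$ in the entries. Factoring $\mu^*a_{ij}=h^{e}\,\widetilde a_{ij}$ near $E$ (with $h$ a local equation for $E$) produces a residual matrix $\widetilde A=(\widetilde a_{ij})$ whose entries have no common factor along $E$, and on $Y'=X'\times \A^{r-1}$ we obtain $\widetilde\mu^*\cI_{W_A}=h^{e}\cdot (\widetilde g_1,\ldots,\widetilde g_s)$ with $\widetilde g_i=\widetilde a_{i1}+\sum_{j\ge 2}\widetilde a_{ij}\,z_j$. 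A further log resolution of $(\widetilde g_i)$ yields a log resolution of $(Y,W_A)$.

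For part (i), the bound comes from two kinds of divisors on this resolution. Along a \emph{horizontal} divisor $\ov{E}=E\times \A^{r-1}$, the discrepancy is $k$, the multiplicity of the pulled-back ideal is $e$, and the hypothesis $k+1\ge cd\ge cre$ yields $(k+1)/e\ge rc$. Along a \emph{vertical} exceptional divisor arising from further principalizing $(\widetilde g_i)$, the center being blown up sits inside the $(r-1)$-dimensional fibers of $Y'\to X'$, contributing an extra $r-1$ to the discrepancy beyond what the residual matrix provides; combined with a contribution of $c$ from the residual analysis, this gives the bound $r-1+c$. Taking the minimum over all divisors yields $\min\{rc,r-1+c\}$.

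Part (ii) runs this in reverse using the Cramer-rule inclusion $\pi^{-1}(\cI_{Z_A})\cdot \cO_Y(1)\subset \cI_{W_A}$ (a consequence of $\det(A')\,y_j\in \cI_{W_A}$ for every $r\times r$ submatrix $A'$ of $A$): starting from a log resolution of $(Y,W_A)$, one pulls back to $Y'$ and pushes down to $X$ to recover a discrepancy inequality on $(X,Z_A)$. The main obstacle throughout is the vertical contribution: controlling the discrepancies and multiplicities of the new exceptional divisors created when principalizing $(\widetilde g_i)$ requires understanding the singularities of the incidence scheme $W_{\widetilde A}$ for matrices $\widetilde A$ whose entries have no common factor, which I expect reduces, by induction on $r$, to known facts about generic determinantal singularities.
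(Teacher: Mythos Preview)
Your approach via an explicit log resolution is genuinely different from the paper's (which works entirely in arc spaces), but as written it has a real gap in the ``vertical'' divisor step. The horizontal part is fine: along $\overline E=E\times\A^{r-1}$ the pulled-back ideal has multiplicity exactly $e=v_E(\mathfrak J)$, and $d\ge re$ together with $k+1\ge cd$ gives $(k+1)/e\ge rc$. The trouble is everything else. After factoring out $h^e$ you still need a full log resolution of $(\widetilde g_1,\ldots,\widetilde g_s)$ on $X'\times\A^{r-1}$, and the new exceptional divisors are in general neither horizontal nor lying over fibers: their centers are arbitrary subvarieties that may dominate proper closed subsets of $X'$ of any codimension, and for these you must control \emph{both} the contribution from $K_{X'/X}$ and the contribution from the further blow-ups. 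Your assertion that such a divisor picks up ``an extra $r-1$ to the discrepancy \ldots\ combined with a contribution of $c$ from the residual analysis'' is not justified: the residual analysis is the original problem for the matrix $\widetilde A$, which is still $s\times r$, so the proposed induction on $r$ has no mechanism (having ``no common factor along $E$'' does not make any entry a unit at every point of $E$, so you cannot globally reduce the size of the matrix). Recording only $e=v_E(\mathfrak J)$ and $d=v_E(\cI_{Z_A})$ captures just the smallest elementary divisor and the sum of all of them; controlling the mixed exceptional divisors requires the valuations of the ideals of $\ell$-minors for every $\ell$. This is exactly what the paper tracks, but in the arc space: it stratifies $X_\infty\smallsetminus (Z_A)_\infty$ by the Smith-normal-form type $\lambda=(\lambda_1\le\cdots\le\lambda_r)$ of $\gamma^*(A)$, computes that over $C_A(\lambda)$ the cylinder $\Cont^{\ge m}(W_A)$ has fiberwise codimension $\sum_{\lambda_j<m}(m-\lambda_j)$ in ${\mathbf P}^{r-1}_\infty$, and then both bounds in i) fall out of an elementary inequality; for ii) one simply takes $m=\lambda_r$.

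Part ii) of your sketch is thinner still. The Cramer inclusion does give $v(\cI_{Z_A})\ge v(\cI_{W_A})$ for valuations on $Y$, but ``pushing down to $X$'' is not a well-defined operation on divisorial valuations: a prime divisor over $Y$ need not yield a divisor over $X$ with any controlled log discrepancy, and no restriction or inversion-of-adjunction statement is indicated that would bridge the gap.
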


Note that we always have ${\rm lct}(X,Z_A)\leq {\rm codim}_X(Z_A)\leq s-r+1$, and both inequalities are equalities in the generic case, that is, 
when $X$ is the affine space of $s\times r$ matrices and $A=(x_{ij})_{1\leq i\leq s,1\leq j\leq r}$
(see \cite[Theorem~D]{Docampo}). If ${\rm lct}(X,Z_A)=s-r+1$, then assertion i) in Theorem~\ref{thm1_main} implies that ${\rm lct}(Y,W_A)=s$.
In particular, in this case $W_A$ is a complete intersection in $Y$, of pure codimension $s$. Of course, in the generic case, we know that
$W_A$ is smooth of codimension $s$ in $Y$. This case shows that the inequality ii) in Theorem~\ref{thm1_main} is not optimal when $s>r\geq 2$.
However, for square matrices, we get

\begin{cor}\label{cor}
If $r=s$, then ${\rm lct}(X,Z_A)=1$ if and only ${\rm lct}(Y,W_A)=r$.
\end{cor}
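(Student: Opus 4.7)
The plan is to deduce the corollary from Theorem~\ref{thm1_main} together with the basic bounds recalled in the introduction. Since $r=s$, the scheme $Z_A$ is a hypersurface cut out by $\det A$, so $\text{codim}_X(Z_A)\leq 1$ and consequently $\text{lct}(X,Z_A)\leq 1$. Similarly, $W_A$ is locally cut out by $r$ equations in $Y$, so $\text{lct}(Y,W_A)\leq r$. Thus each of the two equalities in the corollary asserts that the corresponding log canonical threshold achieves its a priori upper bound, and the task reduces to showing that the upper bound is achieved on one side if and only if it is achieved on the other.

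For the forward implication, I assume $\text{lct}(X,Z_A)=1$ and apply part~(i) of Theorem~\ref{thm1_main} with $c=1$, which yields
\[
\text{lct}(Y,W_A)\geq \min\{r\cdot 1,\; r-1+1\}=r.
\]
Combined with $\text{lct}(Y,W_A)\leq r$, this forces equality. For the reverse implication, I assume $\text{lct}(Y,W_A)=r$ and apply part~(ii) of Theorem~\ref{thm1_main} with $c'=1$, noting that $r=r-1+c'$; this yields
\[
\text{lct}(X,Z_A)\geq \min\bigl\{1,\; \tfrac{1-1}{r}+1\bigr\}=1,
\]
which combined with $\text{lct}(X,Z_A)\leq 1$ again gives equality.

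There is no real obstacle here once Theorem~\ref{thm1_main} is in hand: the corollary is precisely the case where the quantities $c=1$ in (i) and $c'=1$ in (ii) make the two branches of the min coincide, so both directions recover the maximal possible threshold. The only thing to verify carefully is that the generic inequalities $\text{lct}(X,Z_A)\leq 1$ and $\text{lct}(Y,W_A)\leq r$ are available, but these follow from the standard fact recalled before Theorem~\ref{thm1_main} that a subscheme locally defined by $d$ equations has log canonical threshold at most $d$.
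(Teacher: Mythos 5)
Your proof is correct and takes essentially the same route as the paper: the paper simply observes that the corollary follows from Theorem~\ref{thm1_main} once one knows ${\rm lct}(X,Z_A)\leq 1$ and ${\rm lct}(Y,W_A)\leq r$ from Proposition~\ref{max_lct}, and you spell out the arithmetic of applying parts (i) and (ii) with $c=c'=1$.
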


For square matrices, we can go further and relate the property of having rational singularities for $Z_A$ and $W_A$:

\begin{thm}\label{thm2_main}
If $r=s$, then $Z_A$ has rational singularities if and only if $W_A$ is a local complete intersection of pure codimension $r$ with rational singularities.
\end{thm}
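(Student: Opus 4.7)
The plan is to handle both directions by combining the Koszul complex of the $r$ defining sections $\sum_{j}a_{ij}y_j\in H^0(Y,\cO_Y(1))$ of $W_A$ with a crepancy calculation for $\pi\colon W_A\to Z_A$.

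\textbf{Direction $(\Leftarrow)$.} Assume $W_A$ is lci of pure codimension $r$ with rational singularities. Then the defining sections form a regular section of $\cO_Y(1)^{\oplus r}$, and the Koszul complex resolves $\cO_{W_A}$. Pushing forward by $\pi\colon Y\to X$, only the extremal terms contribute (since $R^q\pi_*\cO_Y(-p)=0$ for $0<p<r$, while $\pi_*\cO_Y=R^{r-1}\pi_*\cO_Y(-r)=\cO_X$), and the connecting differential in the spectral sequence is, up to a unit, multiplication by $\det A$; this yields
\[
R\pi_*\cO_{W_A}=\cO_{Z_A}.
\]
Since $\dim W_A=\dim Z_A$, the equality $\pi_*\cO_{W_A}=\cO_{Z_A}$ forces $\pi$ to be birational. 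Composing with a resolution $h\colon\widetilde W\to W_A$, the rationality of $W_A$ gives $Rh_*\cO_{\widetilde W}=\cO_{W_A}$, hence $R(\pi\circ h)_*\cO_{\widetilde W}=\cO_{Z_A}$; as $\pi\circ h$ is a resolution of $Z_A$, we conclude $Z_A$ has rational singularities.

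\textbf{Direction $(\Rightarrow)$.} If $Z_A$ has rational singularities, it is a reduced Cartier divisor with log canonical singularities, so $\lct(X,Z_A)=1$, and Corollary~B gives $\lct(Y,W_A)=r$. Combined with $\lct(Y,W_A)\leq\codim_Y W_A\leq r$, this forces equality throughout, so $W_A$ is lci of pure codimension $r$. The Koszul argument above then applies, giving $R\pi_*\cO_{W_A}=\cO_{Z_A}$ with $\pi$ birational. Adjunction using $\omega_Y=\pi^*\omega_X\otimes\cO_Y(-r)$ and $\det N_{W_A/Y}=\cO_{W_A}(r)$ yields $\omega_{W_A}=\pi^*\omega_X$, and combined with $\omega_{Z_A}=\omega_X|_{Z_A}$ (since $Z_A$ is a principal divisor) together with the projection formula, $R\pi_*\omega_{W_A}=\omega_{Z_A}$; in particular, $\pi$ is crepant. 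As $W_A$ is Cohen--Macaulay (being lci), Grauert--Riemenschneider reduces the rationality of $W_A$ to showing $h_*\omega_{\widetilde W}\xrightarrow{\sim}\omega_{W_A}$ for some resolution $h$. I would take $\widetilde W$ to be a common resolution of $W_A$ and $Z_A$, so that $g=\pi\circ h$ resolves $Z_A$. Rationality of $Z_A$ gives $g_*\omega_{\widetilde W}=\omega_{Z_A}$, and comparing with $\pi_*\omega_{W_A}=\omega_{Z_A}$ via the natural inclusion $h_*\omega_{\widetilde W}\hookrightarrow\omega_{W_A}$ forces the cokernel $\cF=\omega_{W_A}/h_*\omega_{\widetilde W}$ to satisfy $\pi_*\cF=0$. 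To conclude $\cF=0$, I would twist by the $\pi$-ample line bundle $\cO_{W_A}(k)$ and apply Koll\'ar vanishing to both $h$ and $g$ (giving $R^{>0}g_*\omega_{\widetilde W}(k)=R^{>0}h_*\omega_{\widetilde W}(k)=0$), together with the Koszul-based computation of $\pi_*\omega_{W_A}(k)$, to obtain $\pi_*\cF(k)=0$ for all $k\geq 0$; fiberwise global generation of $\cF(k)$ for $k\gg 0$ then gives $\cF=0$.

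\textbf{Main obstacle.} The technical heart is the final step of the forward direction, promoting $\pi_*\cF=0$ to $\cF=0$. The package ``$\pi$ crepant projective birational, $R\pi_*\cO_{W_A}=\cO_{Z_A}$, $Z_A$ rational, $W_A$ lci Cohen--Macaulay'' is essentially the input for a Kempf- or Kov\'acs-type rationality criterion, but implementing it requires simultaneous Koszul computations of $\pi_*\omega_{W_A}(k)$ and Koll\'ar-vanishing input on $g_*\omega_{\widetilde W}(k)$ for all $k$. The lct equality alone is not enough: Corollary~B only delivers log canonicity of the pair $(Y,W_A)$, while rational singularities of $W_A$ is a strictly stronger property that must be extracted from the additional crepancy/projection-formula structure built from the Koszul resolution.
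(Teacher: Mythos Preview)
Your $(\Leftarrow)$ direction is correct and is exactly the Kempf-style argument the paper records in Remark~\ref{rmk:KempfDirectImage}. Your identification of the $d_r$-differential with $\det A$ (up to a unit) is right and is a clean way to get $\pi_*\cO_{W_A}=\cO_{Z_A}$ on the nose, which immediately forces $Z_A$ to be reduced (since $W_A$ is); the paper's remark hedges on this point and invokes Theorem~\ref{thm1_main} for reducedness instead.

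The $(\Rightarrow)$ direction, however, has a genuine gap at precisely the place you flag. Your twisting step is circular. To show $\pi_*\cF(k)=0$ you need the map $g_*\big(\omega_{\widetilde W}\otimes h^*\cO(k)\big)\to \pi_*\omega_{W_A}(k)$ to be surjective. Koll\'ar vanishing kills the higher $R^ig_*$ and $R^ih_*$, and the Koszul complex computes the \emph{target} $\pi_*\omega_{W_A}(k)$, but neither tool computes the \emph{source}. If you try to compute $g_*\omega_{\widetilde W}(k)$ via Grothendieck duality for $g$, you are led to $R\cHom\big(Rg_*\cO_{\widetilde W}(-k),\omega_{Z_A}\big)$, and $Rg_*\cO_{\widetilde W}(-k)=R\pi_*\big(Rh_*\cO_{\widetilde W}\otimes\cO_{W_A}(-k)\big)$; this requires knowing $Rh_*\cO_{\widetilde W}$, which is exactly what you are trying to prove equals $\cO_{W_A}$. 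The clean discrepancy shortcut (``$\pi$ crepant, $Z_A$ canonical $\Rightarrow$ $W_A$ canonical $\Rightarrow$ rational since lci'') would close the gap immediately, but it needs $W_A$ normal as input, and nothing in your setup supplies $R_1$: the fibers of $\pi$ over $(Z_A)_{\rm sing}$ are projective spaces of possibly positive dimension, so $\pi^{-1}\big((Z_A)_{\rm sing}\big)$ can have codimension~$1$ in $W_A$.

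The paper avoids all of this by working entirely on the arc side. Using Theorem~\ref{thm_old2}, it translates ``$Z_A$ has rational singularities'' into the strict inequality ${\rm codim}\big(C_A(\lambda)\cap(\pi_0^X)^{-1}((Z_A)_{\rm sing})\big)>|\lambda|$, and then shows directly---via the same fiberwise Smith-normal-form computation used for Theorem~\ref{thm1_main}---that this strict inequality propagates to the required strict inequality for cylinders in ${\rm Cont}^{\geq m}(W_A)$ lying over $(Z_A)_{\rm sing}$. Irreducibility of $W_A$ falls out of the same estimate. No crepancy bookkeeping, no twisted pushforwards, and normality of $W_A$ is a consequence rather than a hypothesis.
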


For example, in the case of the generic determinantal hypersurface, 
$W_A$ is smooth, of codimension $r$ in $X\times {\mathbf P}^{r-1}$, and we recover Kempf's result \cite{Kempf} that $Z_A$ has rational singularities. 
In fact, it is not hard to see that the ``if" implication in Theorem~\ref{thm2_main} can be deduced via Kempf's approach in \emph{loc. cit}., at least if we know \emph{a priori} that 
$Z_A$ is reduced (see
Remark~\ref{rmk:KempfDirectImage}).

A more interesting example occurs for configuration hypersurfaces, in the sense of \cite{BEK} (we recall the definition in Section~\ref{application} below). In this case, 
the equation $f={\rm det}(A)$ is a square-free polynomial. Moreover, if the matroid corresponding to the hypersurface is connected, then $f$ is irreducible, and therefore
the hypersurface $Z_A$ has rational singularities by \cite[Theorem~1.1]{BMW}, and therefore $W_A$ has rational singularities by Theorem~\ref{thm2_main}. This will be revisited in \cite{BDSW} in arbitrary characteristic, see Remark \ref{rmk:charp}.

The proofs of Theorems~\ref{thm1_main} and \ref{thm2_main} make use of the description of local complete intersection rational singularities and of the log canonical threshold via the codimension 
of certain contact loci in the space of arcs of the ambient variety (see \cite{Mustata1} and \cite{Mustata2}, and also \cite{ELM}). The space of arcs $T_{\infty}$ of a smooth
variety $T$ parametrizes morphisms ${\rm Spec}\,\C\llbracket t\rrbracket\to T$. Given a proper closed subscheme $\Gamma$ of $T$ defined by the ideal $I_{\Gamma}$, the log canonical threshold of $(T,\Gamma)$ can be described 
in terms of the codimensions of the contact loci
$${\rm Cont}^{m}(\Gamma):=\big\{\gamma\in T_{\infty}\mid {\rm ord}_t\gamma^{-1}(I_T)=m\big\}.$$
Similarly, if $\Gamma$ is a local complete intersection in $T$, then one can describe when $\Gamma$ has rational singularities in terms of the contact loci. 
In our setting, given an arc $\gamma\in X_{\infty}\smallsetminus (Z_A)_{\infty}$, we can consider the pull-back $\gamma^*(A)$ of $A$ via $\gamma$, which is an $s\times r$ matrix with 
entries in $\C\llbracket t\rrbracket$, which has a nonzero $r$-minor. 
By the structure theorem for matrices over PIDs, we see that up to the left action of ${\rm GL}_s\big(\C\llbracket t\rrbracket\big)$
and the right action of ${\rm GL}_r\big(\C\llbracket t\rrbracket\big)$, the matrix $\gamma^*(A)$ can be written as a matrix $(c_{ij})$, with $c_{ij}=0$ for $i\neq j$ and
$c_{ii}=t^{\lambda_i}$ for $1\leq i\leq r$,
for
some $\lambda=(\lambda_1,\ldots,\lambda_r)$, with $\lambda_1\leq\ldots\leq\lambda_r$. The key point is to consider the stratification of $X_{\infty}\smallsetminus (Z_A)_{\infty}$
by the loci $C_A(\lambda)$ corresponding to a given $\lambda$. We have $C_A(\lambda)\subseteq {\rm Cont}^{|\lambda|}(Z_A)$, where $|\lambda|=\sum_i\lambda_i$, and
inside each $C_A(\lambda)\times {\mathbf P}^{r-1}_{\infty}\subseteq Y_{\infty}$, one can easily describe 
the codimension of the set of arcs with order $m$ along $W_A$. We note that such a stratification of the space of arcs by the type of the pull-back of a matrix was first considered by Docampo in 
\cite{Docampo} in order to describe the contact loci of generic determinantal varieties, and it was later used by Zhu in \cite{Zhu} in order to give a jet-theoretic proof of Kempf's result \cite{Kempf}
saying that the theta divisor on the Jacobian of a smooth projective curve has rational singularities. 

The paper is organized as follows. In the next section, we recall the connection between singularities of pairs and contact loci in arc spaces, following \cite{ELM}. In
Section~\ref{section_main} we give the proofs of Theorems~\ref{thm1_main} and \ref{thm2_main}, and in Section~\ref{application} we discuss the application to 
configuration hypersurfaces.

\subsection*{\bf Acknowledgment} We are indebted to Uli Walther for many discussions related to the topic of this note. 

\section{Singularities and contact loci}\label{section_review}

Let $X$ be a smooth, irreducible, $n$-dimensional complex algebraic variety. Suppose that $Z$ is a proper closed subscheme of $X$, defined by the ideal sheaf $I_Z$. 
In this section, we recall the connection between the singularities of the pair $(X,Z)$ and the contact loci of $Z$, following \cite{ELM}.

Recall that for every $m\geq 0$, the jet scheme $X_m$ is a variety whose (closed) points parametrize morphisms
${\rm Spec}\,\C[t]/(t^{m+1})\to X$. In particular, we have $X_0=X$. Truncation induces
canonical morphisms $\pi_{m,p}\colon X_m\to X_p$ for all $m>p$, which are affine. Moreover, since $X$ is smooth, of dimension $n$, it is easy to see that $\pi_{m,p}$ is locally
trivial (in the Zariski topology), with fiber ${\mathbf A}^{(m-p)n}$. The projective limit $X_{\infty}=\projlim_mX_m$ is the arc space of $X$ and its ${\mathbf C}$-valued points
 parametrize arcs to $X$, that is,
morphisms ${\rm Spec}\,\C\llbracket t\rrbracket\to X$. Note that we have morphisms $\pi_m\colon X_{\infty}\to X_m$ such that $\pi_p=\pi_{m,p}\circ\pi_m$ if $m>p$. 
When $X$ is not clear from the context, we write $\pi_m^X$ instead of $\pi_m$.

A \emph{cylinder} in $X_{\infty}$ is a subset of the form $C=\pi_m^{-1}(S)$, for some constructible subset $S\subseteq X_m$. We say that this is a \emph{locally closed}, \emph{closed}, or \emph{irreducible}
cylinder if $S$ has the respective property. For a locally closed cylinder $C=\pi_m^{-1}(S)$, we put
$${\rm codim}(C):={\rm codim}_{X_m}(\overline{S}).$$
Of course, all these notions do not depend on the choice of $m$.  It is convenient to make the convention that ${\rm codim}(\emptyset)=\infty$.

Given a closed subscheme $Z$ of $X$, defined by the ideal $I_Z$, we have a function ${\rm ord}_Z\colon X_{\infty}\to {\mathbf Z}_{\geq 0}\cup\{\infty\}$ defined as follows. If $\gamma\in X_{\infty}$, then the inverse image
$\gamma^{-1}(I_Z)$ is an ideal in $\C\llbracket t\rrbracket$, hence it is equal to $(t^m)$ for some $m$ (with the convention that $m=\infty$ if the ideal is $0$). We put ${\rm ord}_Z(\gamma)=m$. 
The contact loci of $Z$ are given by 
$${\rm Cont}^{\geq m}(Z)={\rm ord}_Z^{-1}(\geq m)\quad\text{and}\quad {\rm Cont}^m(Z)={\rm ord}_Z^{-1}(m).$$
Note that ${\rm Cont}^{\geq m}(Z)$ is a closed cylinder and ${\rm Cont}^m(Z)$ is a locally closed cylinder.
On the other hand, if $C$ is any irreducible locally closed cylinder in $X_{\infty}$, we put
$${\rm ord}_C(Z):=\min\big\{{\rm ord}_Z(\gamma)\mid\gamma\in C\big\}\in\Z_{\geq 0}$$
(the fact that this is finite is proved in \cite{ELM}, using the fact that $C$ is a cylinder).
Note that if ${\rm ord}_C(Z)=m$, then we have a subcylinder $C_0\subseteq C$ that is open in $C$ (and thus ${\rm codim}(C_0)={\rm codim}(C)$)
and $C_0\subseteq {\rm Cont}^m(Z)$.

It is shown in \cite{ELM} that if $C$ is an irreducible locally closed cylinder that does not dominate $X$, then ${\rm ord}_C$ 
gives an integer multiple of a divisorial valuation on the function field of $X$. Moreover, every such valuation arises from some cylinder 
as above. The log discrepancy of the valuation is related to the codimension of the cylinder, and this allows one to reformulate certain properties
of the singularities of $(X,Z)$ in terms of codimension of cyclinders.

For the following description of the log canonical threshold ${\rm lct}(X,Z)$, see \cite[Theorem~0.1]{Mustata2} and \cite[Section~2]{ELM}.

\begin{thm}\label{thm_old1}
If $Z$ is a proper closed subscheme in the smooth, irreducible complex algebraic variety $X$, and if $c>0$, then the following are equivalent:
\begin{enumerate}
\item[i)] ${\rm lct}(X,Z)\geq c$.
\item[ii)] ${\rm codim}\big({\rm Cont}^m(Z)\big)\geq cm$ for all $m\geq 0$.
\item[iii)] ${\rm codim}\big({\rm Cont}^{\geq m}(Z)\big)\geq cm$ for all $m\geq 0$.
\item[iv)] ${\rm codim}(C)\geq c\cdot {\rm ord}_C(Z)$ for every locally closed irreducible cylinder $C\subseteq X_{\infty}$. 
\end{enumerate}
\end{thm}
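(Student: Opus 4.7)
The plan is to pass to a log resolution and reduce the computation of both sides to combinatorics on a simple normal crossings divisor. Let $f\colon X'\to X$ be a log resolution of $(X,Z)$ with $f^{-1}(I_Z)=\cO_{X'}(-F)$, where $F=\sum_{i=1}^N a_iE_i$ is SNC, and write $K_{X'/X}=\sum_i k_iE_i$. Then by definition (or the valuative reformulation of lct via divisorial valuations), ${\rm lct}(X,Z)=\min_i(k_i+1)/a_i$.

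The crucial input, which I would establish first, is the codimension formula for multi-contact loci under the induced map $f_\infty\colon X'_\infty\to X_\infty$. For $\nu\in\Z^N_{\geq 0}$, define the multi-contact locus
\[
{\rm Cont}^{\nu}(E)=\big\{\gamma'\in X'_\infty\mid{\rm ord}_{E_i}(\gamma')=\nu_i\text{ for all }i\big\}.
\]
This is a locally closed cylinder of codimension $\sum_i\nu_i$ in $X'_\infty$ (each vanishing condition cuts out one codimension in a chart where the $E_i$ meet transversally). The jet-level change-of-variables formula (Kontsevich; Denef--Loeser; see \cite{ELM}) shows that $f_\infty$ is injective with constructible image on this stratum, and
\[
{\rm codim}\big(f_\infty({\rm Cont}^\nu(E))\big)=\sum_i\nu_i(k_i+1),
\]
the shift by $k_i$ coming precisely from the relative canonical divisor. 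Since ${\rm Cont}^m(Z)$ is disjoint from $Z_\infty$ and any arc not contained in $Z$ lifts uniquely to $X'$, we obtain the decomposition ${\rm Cont}^m(Z)=\bigsqcup_{\nu\cdot a=m}f_\infty({\rm Cont}^\nu(E))$, and consequently
\[
{\rm codim}\,{\rm Cont}^m(Z)=\min_{\nu\colon\sum\nu_i a_i=m}\sum_i\nu_i(k_i+1).
\]

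With this formula in hand, the equivalences are short. For (i)$\Rightarrow$(ii): if $(k_i+1)/a_i\geq c$ for every $i$, then $\sum\nu_i(k_i+1)\geq c\sum\nu_i a_i=cm$. The implication (ii)$\Rightarrow$(iii) comes from writing ${\rm Cont}^{\geq m}(Z)=\bigsqcup_{m'\geq m}{\rm Cont}^{m'}(Z)$ as a cylinder whose codimension equals $\min_{m'\geq m}{\rm codim}\,{\rm Cont}^{m'}(Z)\geq cm$, and (iii)$\Rightarrow$(ii) is immediate from ${\rm Cont}^m(Z)\subseteq{\rm Cont}^{\geq m}(Z)$. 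For (iii)$\Rightarrow$(iv): if $C$ is an irreducible cylinder with ${\rm ord}_C(Z)=m$, then $C\subseteq{\rm Cont}^{\geq m}(Z)$ yields ${\rm codim}(C)\geq{\rm codim}\,{\rm Cont}^{\geq m}(Z)\geq cm=c\cdot{\rm ord}_C(Z)$. Finally, for (iv)$\Rightarrow$(i), pick an index $i$ realizing ${\rm lct}(X,Z)=(k_i+1)/a_i$; the cylinder $C=f_\infty({\rm Cont}^{e_i}(E))$ (where $e_i$ is the $i$-th unit vector) satisfies ${\rm ord}_C(Z)=a_i$ and ${\rm codim}(C)=k_i+1$, so (iv) forces $k_i+1\geq ca_i$, i.e., ${\rm lct}(X,Z)\geq c$.

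The main technical obstacle is the codimension formula for $f_\infty({\rm Cont}^\nu(E))$; everything else is linear programming and an easy cylinder argument. This formula rests on analyzing the fibers of $f_m\colon X'_m\to X_m$, using that the discrepancy divisor controls the failure of $f_\infty$ to be a bijection at the level of jet schemes. The one piece of care needed is to discard arcs lying wholly in the exceptional locus or in $Z$, but these sit in loci of infinite codimension so they do not affect the stated codimensions.
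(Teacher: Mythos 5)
Your proof is correct and is, in essence, the argument from \cite{Mustata2} and \cite[Section~2]{ELM}, which are precisely the references the paper cites for this theorem; the paper records the statement without reproducing a proof. The change-of-variables codimension formula for $f_\infty\big(\mathrm{Cont}^{\nu}(E)\big)$ is indeed the crux, and the remaining implications are the cylinder bookkeeping you describe. Two details worth tightening. First, the decomposition $\mathrm{Cont}^m(Z)=\bigsqcup_{\nu\cdot a=m}f_\infty\big(\mathrm{Cont}^{\nu}(E)\big)$ holds only after discarding the thin set of arcs whose lift lies entirely inside some exceptional $E_i$ with $a_i=0$, and likewise $\mathrm{Cont}^{\geq m}(Z)$ contains the thin set $Z_\infty$. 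You wave at this by saying such loci have ``infinite codimension,'' but codimension is defined in this theory only for cylinders, and these loci are not cylinders; the correct tool is \cite[Proposition~1.10]{ELM}, which says the codimension of a cylinder is computed after intersecting with the strata of any proper closed $X'\subseteq X$, and which the paper itself records as Remark~\ref{cylinders_not_in_arc_space}. Second, when writing $\mathrm{lct}(X,Z)=\min_i(k_i+1)/a_i$ and in the step (iv)$\Rightarrow$(i), you should restrict to indices with $a_i>0$; for $a_i=0$ the cylinder $f_\infty\big(\mathrm{Cont}^{e_i}(E)\big)$ has $\mathrm{ord}_C(Z)=0$ and contributes nothing. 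With these clarifications the argument is complete.
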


\begin{rmk}\label{cylinders_not_in_arc_space}
For every proper closed subscheme $X'$ of $X$, we have a natural embedding $X'_{\infty}\hookrightarrow X_{\infty}$. Given any such $X'$, in assertion iv) in the above theorem,
it is enough to only consider cylinders $C$ such that $C\cap X'_{\infty}=\emptyset$. In fact, for every locally closed cylinder $C$, we have
$${\rm codim}(C)=\min\big\{{\rm codim}(C\cap {\rm Cont}^m(X'))\mid C\cap {\rm Cont}^m(X')\neq\emptyset\big\}$$
(see \cite[Proposition~1.10]{ELM}). 
\end{rmk}

For the following characterization of rational singularities for local complete intersections, see \cite[Theorem~0.1]{Mustata1} and \cite[Section~2]{ELM}.

\begin{thm}\label{thm_old2}
Let $Z$ be a reduced and irreducible proper closed subscheme in the smooth, irreducible complex algebraic variety $X$. If $Z$ is a local complete intersection, of
pure codimension $r$ in $X$, then
the following are equivalent:
\begin{enumerate}
\item[i)] $Z$ has rational singularities.
\item[ii)] For every locally closed cylinder $C\subseteq X_{\infty}$ that dominates a proper closed subset of $Z$,
we have
$${\rm codim}(C)>r\cdot {\rm ord}_C(Z).$$
\end{enumerate}
\end{thm}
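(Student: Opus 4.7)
The plan is to reduce the theorem to a computation on a log resolution of $(X,Z)$, using the dictionary between cylinders in the arc space and divisorial valuations from \cite{ELM}. I would fix a log resolution $f\colon Y\to X$ of $I_Z$ with $f^{-1}(I_Z)=\cO_Y(-\sum_i b_iE_i)$ and $K_{Y/X}=\sum_i a_iE_i$, where $\sum_i E_i$ is SNC. Since $Z$ is lci of pure codimension $r$ and irreducible, there is a distinguished component $E_0$ with $c_X(E_0)=Z$ and $b_0=1$ (the strict transform of $Z$ if $r=1$, or an exceptional divisor dominating $Z$ obtained by blowing up $Z$ if $r\geq 2$), while every other $E_i$ with $b_i\geq 1$ has $c_X(E_i)\subsetneq Z$. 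The key technical ingredient is that every locally closed cylinder in $X_\infty$ decomposes, after lifting along $f_\infty\colon Y_\infty\to X_\infty$, into pieces $C_\nu$ indexed by the vanishing orders $\nu=(\nu_i)$ of arcs along the $E_i$, and the change-of-variables formula yields
\[
\operatorname{codim}(C_\nu)=\sum_i\nu_i(1+a_i),\qquad \operatorname{ord}_{C_\nu}(Z)=\sum_i\nu_i b_i.
\]

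Next I would invoke the standard numerical criterion: by Elkik's theorem, rationality equals canonicity for lci $Z$, and canonicity translates, through the multiplier-ideal formalism at the borderline exponent $r$ (i.e., $\cJ(X,(r-\varepsilon)\cdot Z)=\cO_X$ for all sufficiently small $\varepsilon>0$), into the strict inequalities $1+a_i>r b_i$ for every $E_i$ with $c_X(E_i)\subsetneq Z$. For (i)$\Rightarrow$(ii), I would take $C$ dominating $W\subsetneq Z$, decompose it into pieces $C_\nu$, and verify that $\nu_0=0$ on each nonempty piece (otherwise a generic arc of $C_\nu$ originates at a general point of $E_0$, hence maps to a general point of $Z$, so $C_\nu$ would dominate $Z$, contradicting $W\subsetneq Z$). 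The weighted sum of the inequalities $1+a_i>r b_i$ then gives $\operatorname{codim}(C_\nu)>r\cdot\operatorname{ord}_{C_\nu}(Z)$, and passing to the piece that realizes $\operatorname{codim}(C)$ yields (ii). For (ii)$\Rightarrow$(i), for each $E_i$ with $c_X(E_i)\subsetneq Z$ I would construct the ``divisorial cylinder'' $C_i\subseteq X_\infty$ as the closure of the image under $f_\infty$ of arcs with order $1$ along $E_i$ and order $0$ along all other $E_j$. This cylinder dominates $c_X(E_i)\subsetneq Z$ and satisfies $\operatorname{codim}(C_i)=1+a_i$, $\operatorname{ord}_{C_i}(Z)=b_i$, so applying (ii) forces $1+a_i>r b_i$, which is the canonicity criterion.

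The main obstacle is the change-of-variables formula underlying the codimension-via-multi-order dictionary. Although this is standard arc-space technology developed carefully in \cite{ELM}, one must be attentive: not every cylinder in $X_\infty$ is an $f_\infty$-image of a cylinder in $Y_\infty$, so the decomposition is only valid up to sub-cylinders of strictly larger codimension, which must be handled separately. A secondary subtlety lies in the numerical criterion for canonicity of lci subvarieties at the borderline exponent $r$: it is exactly what singles out the distinguished component $E_0$---excluded from (ii) by the hypothesis that $C$ dominate a \emph{proper} subset of $Z$---as the unique potential source of equality in $1+a_i\geq r b_i$.
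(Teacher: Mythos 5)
The paper does not prove Theorem~\ref{thm_old2}; it quotes it from \cite{Mustata1} and \cite{ELM}. Your overall strategy --- log resolution, change-of-variables formula, a discrepancy criterion --- is the right arc-space approach, but two of the reductions do not hold up as written.

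First, the derivation of the numerical criterion is wrong. The condition $\mathcal{J}\big(X,(r-\varepsilon)Z\big)=\mathcal{O}_X$ for all small $\varepsilon>0$ is exactly $\operatorname{lct}(X,Z)\geq r$, which yields only the \emph{non-strict} inequalities $1+a_i\geq rb_i$ (log canonicity of $(X,rZ)$), not canonicity of $Z$. The distinction is real: the nodal cubic $Z\subset\mathbf{A}^2$ (so $r=1$) has $\operatorname{lct}(\mathbf{A}^2,Z)=1$, hence $\mathcal{J}\big(\mathbf{A}^2,(1-\varepsilon)Z\big)=\mathcal{O}_{\mathbf{A}^2}$, yet the exceptional divisor $E_1$ of the blowup at the node has $a_1=1$, $b_1=2$, so $1+a_1=2=rb_1$ is \emph{not} strict, and indeed $Z$ is not canonical. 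The strict criterion you want (that $1+a_E>rb_E$ for every $E$ over $X$ with $c_X(E)\subsetneq Z$ characterizes canonicity of a reduced irreducible lci $Z$) is true, but it is precisely inversion of adjunction for local complete intersections, due to Ein and \Mustata; that is a nontrivial arc-space theorem in its own right, not a formal consequence of the multiplier-ideal package, and its known proofs are of a piece with the theorem you are trying to reprove.

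Second, in the (i)$\Rightarrow$(ii) direction, the claim that $\nu_0=0$ on every nonempty piece $C_\nu$ is false: $C_\nu$ is cut out inside the multi-contact locus by the requirement that the arcs originate over $W$, so a piece with $\nu_0>0$ is typically nonempty --- it is simply a proper sub-cylinder of the multi-contact locus, of strictly larger codimension, and in particular does \emph{not} dominate $c_X(E_0)=Z$. What you actually need here is the refinement in \cite[Theorem~A]{ELM}: an irreducible cylinder $C$ not contained in $Z_\infty$ has $\operatorname{ord}_C=q\,\operatorname{ord}_E$ for some prime divisor $E$ over $X$ and some $q\geq 1$, with $\operatorname{codim}(C)\geq q(1+k_E)$, and equality forces $C$ to dominate $c_X(E)$. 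One then splits into the case $c_X(E)\subsetneq Z$ (apply the strict numerical criterion) and the case $c_X(E)=Z$ (there $C$ dominating $W\subsetneq Z$ forces the codimension bound to be strict). With these two repairs the rest of your outline goes through.
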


For future reference, we record the following well-known fact:

\begin{prop}\label{max_lct}
If $X$ is a smooth, irreducible complex algebraic variety, and $Z$ is a closed subscheme of $X$ that can be locally defined by $r$ equations, then ${\rm lct}(X,Z)\leq r$,
and if equality holds, then $Z$ is reduced and a local complete intersection of pure codimension $r$. 
\end{prop}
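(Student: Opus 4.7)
The plan is to bound ${\rm lct}(X,Z)$ from above by a direct contact-loci computation, and then upgrade the equality case by producing a single cylinder that detects non-reducedness. Throughout I work in local coordinates. Set $d := {\rm codim}_X(Z)$. Krull's height theorem gives $d \leq r$, since $I_Z$ is locally generated by $r$ elements. Choose an irreducible component $Z_0$ of $Z$ of codimension $d$, a smooth point $x_0 \in Z_0$ at which $Z_0$ is the only component of $Z$ meeting a neighborhood, and local coordinates $u_1,\ldots,u_n$ with $Z_0 = V(u_1,\ldots,u_d)$. The condition $\gamma \in {\rm Cont}^{\geq m}(Z_0)$ amounts to $\gamma^*(u_i) \in (t^m)$ for $1\leq i\leq d$, which imposes codimension $dm$ on jets. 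Since $I_Z \subseteq I_{Z_0}$ locally gives ${\rm Cont}^{\geq m}(Z_0) \subseteq {\rm Cont}^{\geq m}(Z)$, we obtain ${\rm codim}\bigl({\rm Cont}^{\geq m}(Z)\bigr) \leq dm$, and Theorem~\ref{thm_old1}(iii) yields ${\rm lct}(X,Z) \leq d \leq r$.

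Assume now that ${\rm lct}(X,Z) = r$. The chain above collapses to equalities, so $d = r$; Krull's theorem applied to every component forces $Z$ to have pure codimension $r$, and being locally generated by $r = {\rm codim}$ elements makes $Z$ a local complete intersection of pure codimension $r$. In particular $Z$ is Cohen-Macaulay, so reducedness is equivalent to generic reducedness along every irreducible component. Suppose for contradiction that $Z$ fails to be reduced at the generic point $\eta_0$ of some irreducible component $Z_0$. Pick $x_0 \in Z_0$ as above and local coordinates with $I_{Z_0} = (u_1,\ldots,u_r)$. Write the local generators of $I_Z$ as $f_k = \sum_{i=1}^{r} a_{ki} u_i$ and let $A := (a_{ki})$. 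By Nakayama's lemma, $(f_1,\ldots,f_r) = \mathfrak{m}_{\eta_0}$ (i.e.\ $Z$ is reduced at $\eta_0$) iff $\det(A)$ does not vanish at $\eta_0$; so non-reducedness at $\eta_0$ is equivalent to $\det(A) \equiv 0$ on $Z_0$, and the generic rank $\rho$ of $A$ on $Z_0$ satisfies $\rho \leq r-1$.

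Shrink $Z_0$ to a dense open $U$ on which $A$ has constant rank $\rho$, and consider the irreducible locally closed cylinder
$$C := \pi_0^{-1}(U) \cap {\rm Cont}^{\geq 2}(Z).$$
For any $\gamma$ with $\gamma(0) \in U$, the coefficient of $t$ in $\gamma^*(f_k)$ equals $\sum_{i=1}^{r} a_{ki}(\gamma(0))\alpha_i$, where $\alpha_i$ is the $t$-coefficient of $\gamma^*(u_i)$; hence ${\rm ord}_Z(\gamma) \geq 2$ is the linear condition $A(\gamma(0))\cdot (\alpha_1,\ldots,\alpha_r) = 0$, imposing $\rho$ further conditions. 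Together with ${\rm codim}\bigl(\pi_0^{-1}(U)\bigr) = r$ this gives ${\rm codim}(C) = r + \rho$, and since $C \subseteq {\rm Cont}^{\geq 2}(Z)$ we have ${\rm ord}_C(Z) \geq 2$. Applying Theorem~\ref{thm_old1}(iv),
$$r = {\rm lct}(X,Z) \leq \frac{{\rm codim}(C)}{{\rm ord}_C(Z)} \leq \frac{r+\rho}{2} \leq \frac{2r-1}{2} < r,$$
a contradiction. The main obstacle is this last step: finding a cylinder whose codimension-to-order ratio witnesses the strict drop of ${\rm lct}(X,Z)$ below $r$ when $Z$ is not reduced; the earlier parts reduce to Krull's theorem and the standard arc-space dictionary (Theorem~\ref{thm_old1}).
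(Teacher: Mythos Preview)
Your argument is correct, but it follows a genuinely different route from the paper's proof. The paper handles the inequality $\lct(X,Z)\leq r$ via the standard monotonicity properties of the log canonical threshold: $\lct(X,Z)\leq\lct(X,Z_{\rm red})\leq\lct(U,Z'\cap U)=\codim_X(Z')\leq r$ for any irreducible component $Z'$ and a smooth open subset $U$. For generic reducedness in the equality case it simply cites an outside reference (\cite[Remark~4.2]{CDMO}). You instead run everything through the arc-space dictionary of Theorem~\ref{thm_old1}: the inequality comes from exhibiting ${\rm Cont}^{\geq m}(Z_0)\subseteq{\rm Cont}^{\geq m}(Z)$ with codimension $dm$, and reducedness from the explicit cylinder $C=\pi_0^{-1}(U)\cap{\rm Cont}^{\geq 2}(Z)$ whose codimension $r+\rho$ you compute by linearizing the generators $f_k=\sum a_{ki}u_i$ at first order.

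What this buys: the paper's argument is shorter if one is willing to invoke the basic lct formalism and the external citation, while yours is entirely self-contained within the arc-space framework already set up in Section~\ref{section_review}. Your cylinder construction for non-reducedness is a clean, explicit witness that $\lct<r$ drops by at least $\tfrac{1}{2}$ whenever the Jacobian-type matrix $A$ degenerates generically along a component; this is more informative than the citation and meshes well with the paper's overall strategy of stratifying arc spaces by matrix rank data. One small point worth making explicit: your first paragraph is implicitly local (the coordinates $u_i$ and the inclusion $I_Z\subseteq I_{Z_0}$ live on an open neighborhood of $x_0$), so strictly speaking you are bounding $\lct(U,Z\cap U)$ and then using $\lct(X,Z)\leq\lct(U,Z\cap U)$; this is harmless but could be stated.
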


\begin{proof}
If $Z_{\rm red}\hookrightarrow Z$ is the reduced scheme with the same support as $Z$, then ${\rm lct}(X,Z)\leq {\rm lct}(X,Z_{\rm red})$. 
If $Z'$ is any irreducible component of $Z_{\rm red}$ and $U$ is an open subset of $X$ such that $Z'\cap U$ is smooth, then
$${\rm lct}(X,Z_{\rm red})\leq {\rm lct}(U,Z'\cap U)={\rm codim}_X(Z').$$
Since $Z$ is locally cut out by $r$ equations, it follows that ${\rm codim}_X(Z')\leq r$ and thus ${\rm lct}(X,Z)\leq r$.
Moreover, if ${\rm lct}(X,Z)=r$, then we see that every irreducible component of $Z$ has codimension $r$, and thus $Z$ is a local complete intersection. 
In particular, $Z$ is Cohen-Macaulay, hence in order to prove that it is reduced, it is enough to show that it is generically reduced. This is easy
(see for example \cite[Remark~4.2]{CDMO}). 
\end{proof}

\section{Proofs of the main results}\label{section_main}

Let $X$ be a smooth, irreducible complex algebraic variety, with $\dim(X)=n$. Suppose we have a matrix $A=(a_{ij})_{1\leq i\leq s,1\leq j\leq r}$, with $s\geq r$, and
$a_{ij}\in\cO_X(X)$ for all $i$ and $j$, such that the ideal generated by the $r$-minors of $A$ is nonzero, and thus defines a proper closed subscheme $Z_A$ of $X$.

We begin by setting up some notation. For every arc $\gamma\colon {\rm Spec}\,\C\llbracket t\rrbracket\to X$, we consider the pull-back
$\gamma^*(A)$, which is an $s\times r$ matrix with entries in $\C\llbracket t\rrbracket$. For simplicity, let's assume that $\gamma\not\in (Z_A)_{\infty}$,
so some $r$-minor of $\gamma^*(A)$ is nonzero.
By the structure theorem for matrices over PIDs, we know that there are an $r\times r$-matrix $Q$ and an $s\times s$-matrix $P$, with entries in $\C\llbracket t\rrbracket$,
both invertible, 
such that
\begin{equation}\label{structure_matrix}
P\cdot\gamma^*(A)\cdot Q={\rm diag}(t^{\lambda_1},\ldots,t^{\lambda_r}),
\end{equation}
where we denote by ${\rm diag}(c_1,\ldots,c_r)$ the $s\times r$-matrix $(c_{ij})$ with $c_{ii}=c_i$ for $1\leq i\leq r$, and $c_{i,j}=0$ for $i\neq j$.
In (\ref{structure_matrix}), we may and will assume that
$0\leq\lambda_1\leq\ldots\leq\lambda_r<\infty$.
 Given such $\lambda=(\lambda_1,\ldots,\lambda_r)$,
we denote by $C_A(\lambda)$ the set of arcs $\gamma$ such that $\gamma^*(A)$ satisfies (\ref{structure_matrix}) for suitable $P$ and $Q$.
Note that $C_A(\lambda)$ is a locally closed cylinder in $X_{\infty}$: if we denote by $Z_{\ell}$ the subscheme of $X$ defined by the ideal generated by the $\ell$-minors of $A$,
then
$$C_A(\lambda)=\bigcap_{\ell=1}^r{\rm Cont}^{\lambda_1+\ldots+\lambda_{\ell}}(Z_{\ell}).$$
In particular, we see that $C_A(\lambda)\subseteq {\rm Cont}^{|\lambda|}(Z_A)$, where we put $|\lambda|=\lambda_1+\ldots+\lambda_r$.
In fact, for every $m$ we have
\begin{equation}\label{eq_disjoint_decomposition}
{\rm Cont}^m(Z_A)=\bigsqcup_{|\lambda|=m}C_A(\lambda),
\end{equation}
and thus $\codim \Cont^m(Z_A) = \min_{|\lambda| = m} \codim C_A(\lambda)$. 

Recall now that given a matrix $A$ as above, we consider the closed subscheme $W_A$ of $Y=X\times {\mathbf P}^{r-1}$ 
defined by the ideal $J$ generated by $\sum_{j=1}^ra_{ij}y_j$, for $1\leq i\leq s$, where we denote by $y_1,\ldots,y_r$ the homogeneous coordinates on ${\mathbf P}^{r-1}$.

We can now give the proofs of the theorems stated in the Introduction.

\begin{proof}[Proof of Theorem~\ref{thm1_main}]
It follows from the previous discussion and Theorem~\ref{thm_old1} that
\begin{equation}\label{eq_char_lct_Z}
{\rm lct}(X,Z_A)\geq c\quad\text{if and only if}\quad {\rm codim}\big(C_A(\lambda)\big)\geq c|\lambda|\,\,\text{for all}\,\,\lambda.
\end{equation}
We next need to translate the condition ${\rm lct}(Y,W_A)\geq c'$.

Note that we have a canonical isomorphism $Y_{\infty}\simeq X_{\infty}\times {\mathbf P}^{r-1}_{\infty}$. 
Since we have the decomposition 
$${\rm Cont}^m(W_A)\smallsetminus \big((Z_A)_{\infty}\times {\mathbf P}^{r-1}_{\infty}\big)=\bigsqcup_{\lambda}{\rm Cont}^m(W_A)_{\lambda},$$
where
$${\rm Cont}^m(W_A)_{\lambda}=
 {\rm Cont}^m(W_A)\cap \big(C_A(\lambda)\times {\mathbf P}^{r-1}_{\infty}\big),$$
 it follows from \cite[Proposition~1.10]{ELM} that
 $${\rm codim}\big({\rm Cont}^m(W_A)\big)=\min_{\lambda}{\rm codim}\big({\rm Cont}^m(W_A)_{\lambda}\big).$$
We conclude using Theorem~\ref{thm_old1} that
 \begin{equation}\label{eq_char_lct_W1}
 {\rm lct}(Y,W_A)\geq c'\quad\text{if and only if}\quad {\rm codim}\big({\rm Cont}^m(W_A)_{\lambda}\big)\geq c'm\,\,\text{for all}\,\,m\,\,\text{and all}\,\,\lambda.
 \end{equation}
 
 The key point is to compute, for a given $\gamma\in C_A(\lambda)$, the codimension of 
 $${\rm Cont}^{\geq m}(W_A)_{\gamma}:=(\{\gamma\}\times {\mathbf P}^{r-1}_{\infty})\cap {\rm Cont}^{\geq m}(W_A),$$
 that we view as a locally closed cylinder in ${\mathbf P}^{r-1}_{\infty}$. 
 Note that we can think of ${\mathbf P}^{r-1}_{\infty}$ in two ways. On the one hand, the affine open cover ${\mathbf P}^{r-1}=U_1\cup\ldots\cup U_r$, where
 $U_i=(y_i\neq 0)$, induces a corresponding affine open cover ${\mathbf P}^{r-1}_{\infty}=(U_1)_{\infty}\cup\ldots\cup (U_r)_{\infty}$. 
 On the other hand, we can think of ${\mathbf P}^{r-1}_{\infty}$ as the quotient of the set of elements $u=(u_1,\ldots,u_n)\in\big(\C\llbracket t\rrbracket\big)^r$,
 with some $u_i$ being invertible, modulo the action of invertible elements in $\C\llbracket t\rrbracket$. 
 Via this latter description, the subset $(U_i)_{\infty}$ corresponds to those $u$ such that $u_i$ is invertible, in which case we may assume that it is $1$. We also note that, via this description,
 ${\rm Cont}^{\geq m}(W_A)$ consists of those pairs $(\gamma,u)$ such that if $\gamma^*(A)=(w_{ij})\in M_{s,r}\big(\C\llbracket t\rrbracket\big)$, then 
 ${\rm ord}\big(\sum_jw_{ij}u_j\big)\geq m$ for $1\leq i\leq s$. 
 
 Given any $\gamma\in C_A(\lambda)$, we can find invertible
 matrices $P\in M_s\big(\C\llbracket t\rrbracket\big)$ and $Q\in M_r\big(\C\llbracket t\rrbracket\big)$ such that $P\cdot\gamma^*(A)\cdot Q={\rm diag}(t^{\lambda_1},\ldots,t^{\lambda_r})$. 
It is straightforward to check that if $P=(c_{ij})$ and 
$u=(u_1,\ldots,u_r)\in \big(\C\llbracket t\rrbracket\big)^r$, then 
$$\min_i{\rm ord}(u_i)=\min_i{\rm ord}\big(\sum_jc_{ij}u_j\big).$$
Moreover, if $C$ is a locally closed cylinder in ${\mathbf P}^{r-1}_{\infty}$ and $Q\cdot C$ is the image of $C$ via the automorphism of ${\mathbf P}^{r-1}_{\infty}$ induced by $Q$,
then ${\rm codim}(Q\cdot C)={\rm codim}(C)$. 

Therefore, in order to compute ${\rm codim}\big({\rm Cont}^{\geq m}(W_A)_{\gamma}\big)$, we may and will assume that
$\gamma^*(A)={\rm diag}(t^{\lambda_1},\ldots,t^{\lambda_r})$. 
Let us consider ${\rm Cont}^{\geq m}(W_A)_{\gamma}\cap (U_i)_{\infty}$. This consists of those
$u=(u_1,\ldots,1,\ldots,u_r)\in \big(\C\llbracket t\rrbracket\big)^{r-1}$ 
(with $u_j=1$ for $j=i$) such that 
${\rm ord}(t^{\lambda_j}u_j)\geq m$ for all $j$. This is equivalent to $\lambda_i\geq m$ and ${\rm ord}(u_k)\geq m-\lambda_k$ for $1\leq k\leq i-1$.
We see that this is nonempty if and only if $\lambda_i\geq m$, and in this case ${\rm Cont}^{\geq m}(W_A)_{\gamma}\cap (U_i)_{\infty}$ is
an irreducible closed cylinder in $(U_i)_{\infty}$, of codimension
$\sum_{j;\lambda_j<m}(m-\lambda_j)$. By letting $i$ vary, we conclude that ${\rm Cont}^{\geq m}(W_A)_{\gamma}$ is nonempty if and only if $\lambda_r\geq m$,
and in this case it is an irreducible closed cylinder in ${\mathbf P}^{r-1}_{\infty}$, with
$${\rm codim}\big({\rm Cont}^{\geq m}(W_A)_{\gamma}\big)=\sum_{j;\lambda_j<m}(m-\lambda_j).$$
Since this is independent of $\gamma\in C_A(\lambda)$, it follows that 
$$\big(C_A(\lambda)\times {\mathbf P}^{r-1}_{\infty}\big)\cap {\rm Cont}^{\geq m}(W_A)\neq \emptyset\quad\text{if and only if}\quad \lambda_r\geq m,$$
and if this is the case, then its codimension in $Y_{\infty}$ is equal to 
$${\rm codim}\big(C_A(\lambda)\big)+\sum_{j;\lambda_j<m}(m-\lambda_j).$$
We deduce using Theorem~\ref{thm_old1} that
${\rm lct}(Y,W_A)\geq c'$ if and only if for all $\lambda$ and all $m\leq\lambda_r$, we have
\begin{equation}\label{eq_lct_W}
{\rm codim}\big(C_A(\lambda)\big)+\sum_{j;\lambda_j<m}(m-\lambda_j)\geq c'm.
\end{equation}

Suppose first that ${\rm lct}(X,Z_A)\geq c$, hence ${\rm codim}\big(C_A(\lambda)\big)\geq c(\lambda_1+\ldots+\lambda_r)$ for all $\lambda$, with $\lambda_1\leq\ldots\leq \lambda_r$
by (\ref{eq_char_lct_Z}). If $m\leq \lambda_r$, then
\begin{equation}\label{ineq100}
{\rm codim}\big(C_A(\lambda)\big)+\sum_{j;\lambda_j<m}(m-\lambda_j) \geq c(\lambda_1+\ldots+\lambda_r)+\sum_{j;\lambda_j<m}(m-\lambda_j).
\end{equation}
In order to prove i), it is enough to show that the right-hand side of (\ref{ineq100}) is bounded below by $m\cdot\min\{rc,r-1+c\}$. 
Note that $\min\{rc,r-1+c\}=rc$ if $c\leq 1$ and $\min\{rc,r-1+c\}=r-1+c$ if $c\geq 1$.

Let $p=\max\{i\mid \lambda_i<m\}$ (note that if $\lambda_i\geq m$ for all $i$, then the right-hand side of (\ref{ineq100}) is clearly $\geq rcm$). With this notation, the right-hand side of (\ref{ineq100}) is equal to
$$
c(\lambda_1+\ldots+\lambda_p)+c(\lambda_{p+1}+\ldots+\lambda_r)+(m-\lambda_1)+\ldots+(m-\lambda_p)
$$
\begin{equation}\label{ineq111}
\geq c(r-p)m+pm-(1-c)\sum_{i=1}^p\lambda_i.
\end{equation}
Note that if $c\leq 1$, then the expression in (\ref{ineq111}) is 
$$rcm+(1-c)\left(pm-\sum_{i=1}^p\lambda_i\right)\geq rcm,$$ since $\lambda_i\leq m$ for $1\leq i\leq p$.
On the other hand, if $c\geq 1$, then using the fact that $p\leq r-1$, we see that the expression in (\ref{ineq111}) is
$$\geq c(r-p)m+pm=\big(cr+p(1-c)\big)m\geq\big(cr+(r-1)(1-c)\big)m=(r-1+c)m.$$
This completes the proof of i). 

Suppose now that ${\rm lct}(Y,W_A)\geq r-1+c'$. We need to show that ${\rm lct}(X,Z_A)\geq c'$ if $c'\leq 1$ and ${\rm lct}(X,Z_A)\geq 
\tfrac{c'-1}{r}+1$ if $c'\geq 1$. 
Given $\lambda$, with $\lambda_1\leq\ldots\leq\lambda_r$, let $m=\lambda_r$, so (\ref{eq_lct_W}) gives
$${\rm codim}\big(C_A(\lambda)\big)+\sum_{i=1}^r(\lambda_r-\lambda_i)\geq (r-1+c')\lambda_r,$$
and thus
\begin{equation}\label{ineq112}
{\rm codim}\big(C_A(\lambda)\big)\geq \lambda_1+\ldots+\lambda_{r-1}+c'\lambda_r=|\lambda|+(c'-1)\lambda_r.
\end{equation}
Therefore, if $c'\leq 1$, since $|\lambda|\geq\lambda_r$, we have
$${\rm codim}\big(C_A(\lambda)\big)\geq c'|\lambda|.$$
On the other hand, if $c'\geq 1$, using the fact that $\lambda_r\geq\tfrac{1}{r}|\lambda|$, we get
$${\rm codim}\big(C_A(\lambda)\big)\geq \big(\tfrac{c'-1}{r}+1\big)|\lambda|.$$
We now deduce the conclusion in ii) using (\ref{eq_char_lct_Z}).
\end{proof}

The assertion in Corollary~\ref{cor} follows directly from that in Theorem~\ref{thm1_main} since when $r=s$,
we always have ${\rm lct}(X,Z_A)\leq 1$ and ${\rm lct}(Y,W_A)\leq r$ by Proposition~\ref{max_lct}.
From now on we assume that we have a square matrix and relate
the conditions for $Z_A$ and $W_A$ to have rational singularities.
Recall that for every scheme $T$, we denote by $\pi_0^T$ the canonical morphism $T_{\infty}\to T$.

\begin{proof}[Proof of Theorem~\ref{thm2_main}]
Suppose first that $Z_A$ has rational singularities. In particular, $Z_A$ is reduced and its irreducible components do not intersect. After replacing $X$ by 
the open subsets in a suitable open cover, we may and will assume that $Z_A$ is also irreducible. Using Theorem~\ref{thm_old2}, we see that if
$(Z_A)_{\rm sing}$ is the singular locus of $Z_A$, then 
\begin{equation}\label{eq_cond_rat1}
{\rm codim}\big(C_A(\lambda)\cap(\pi^X_0)^{-1}((Z_A)_{\rm sing})\big)>|\lambda|\quad\text{for all}\quad\lambda.
\end{equation}

On the other hand, since $Z_A$ has rational singularities, it follows that ${\rm lct}(X,Z_A)=1$. We deduce from Theorem~\ref{thm1_main} that ${\rm lct}(Y,W_A)=r$. Since $W_A$
is locally defined in $Y$ by $r$ equations, it follows from Proposition~\ref{max_lct} that $W_A$ is reduced and a local complete intersection in $Y$, of pure codimension $r$. 

We note that 
$$(Z_A)_{\rm sm}:=Z_A\smallsetminus (Z_A)_{\rm sing}\subseteq\big\{x\in X\mid {\rm rank}\,A(x)=r-1\big\}.$$
Indeed, if ${\rm rank}\,A(x)=r'$, then we may assume that $A(x)={\rm diag}(1,\ldots,1,0,\ldots,0)$, with $r'$ nonzero entries, and it is easy to see that
in this case ${\rm mult}_x(Z_A)\geq r-r'$, hence $x$ being a smooth point of $Z_A$ implies $r'=r-1$.
This implies that the morphism $p\colon W_A\to Z_A$ induced by the first projection is an isomorphism over $(Z_A)_{\rm sm}$. Since $(Z_A)_{\rm sm}$
is an open subset of $Z_A$, which is irreducible, we conclude that if $W_A$ is reducible, then some irreducible component of
$W_A$ lies over $(Z_A)_{\rm sing}$.

Note that for every locally closed cylinder $C\subseteq {\rm Cont}^{\geq m}(W_A)$, whose image in $X$ lies in $(Z_A)_{\rm sing}$,
we have
\begin{equation}\label{eq_cond_rat2}
{\rm codim}(C)>rm.
\end{equation}
Indeed, after taking a suitable decomposition of $C$, we may assume that the image of $C$ in $X_{\infty}$ lies in some $C_A(\lambda)$
(see Remark~\ref{cylinders_not_in_arc_space}), and we then argue 
as in the proof of Theorem~\ref{thm1_main}, using the fact that we have the strict inequality in (\ref{eq_cond_rat1}). 
First, (\ref{eq_cond_rat2}) implies that $W_A$ is irreducible: otherwise, we have seen that we have an irreducible component $T$ of $W_A$ (whose codimension in $Y$ is $r$)
that lies over $(Z_A)_{\rm sing}$; if we take $C={\rm Cont}^{\geq m}(W_A)\cap(\pi_0^Y)^{-1}(U)$ for some $m\geq 1$ and some smooth open subset $U$ of $W_A$ contained
in $T$, then 
${\rm codim}(C)=rm$, contradicting (\ref{eq_cond_rat2}). 
Second, we see that for every locally closed cylinder $C\subseteq {\rm Cont}^{\geq m}(W_A)$ 
whose image in $Y$ lies in a proper closed subset of $W_A$,
the 
inequality (\ref{eq_cond_rat2}) holds: indeed, if the image of $C$ in $X$ is not contained in $(Z_A)_{\rm sing}$, then after replacing $X$ by a suitable open subset,
we may assume that both $Z_A$ and $W_A$ are smooth, in which case the assertion is clear. We can now apply Theorem~\ref{thm_old2} to conclude that
$W_A$ has rational singularities.

Conversely, let us assume that $W_A$ is a local complete intersection of pure codimension $r$ in $Y$, with rational singularities. In particular,
we know that $W_A$ is reduced and ${\rm lct}(Y,W_A)=r$, hence ${\rm lct}(X,Z_A)=1$ by Theorem~\ref{thm1_main}. Therefore, $Z_A$ is reduced. 

As before, we consider the morphism $p\colon W_A\to Z_A$ induced by the first projection. 
We note that if $W_1,\ldots,W_d$ are the irreducible components of $W_A$, since $W_A$ has rational singularities, we have $W_i\cap W_j=\emptyset$ for all $i\neq j$. Since $p^{-1}(x)$ is irreducible
for every $x\in Z_A$ (since it is a linear subspace of ${\mathbf P}^{r-1}$), it follows that $p(W_i)\cap p(W_j)=\emptyset$. Since $p$ is a proper morphism, each $p(W_i)$ is closed in $X$,
hence after replacing $X$ by the open subsets in a suitable open cover, we may and will assume that $W_A$ is irreducible, hence $Z_A=p(W_A)$ is irreducible too.

By Theorem~\ref{thm_old2}, in order to show that $Z_A$ has rational singularities, it is enough to show that if $C\subseteq (\pi_0^X)^{-1}(Z_A)$ is an irreducible locally closed cylinder
whose image in $X$ is contained in a proper closed subset $Z'$ of $Z_A$, 
and $m={\rm ord}_C(Z_A)$, then ${\rm codim}(C)>m$. 
By Remark~\ref{cylinders_not_in_arc_space}, we may assume that $C\subseteq C_A(\lambda)$, for some $\lambda=(\lambda_1,\ldots,\lambda_r)$,
with $\lambda_1\leq\ldots\leq\lambda_r$. Let $C'={\rm Cont}^{\geq\lambda_r}(W_A)\cap (C\times{\mathbf P}^{r-1}_{\infty})$. 
The computation in the proof of Theorem~\ref{thm1_main} implies that
$${\rm codim}(C')={\rm codim}(C)+\sum_{i=1}^r(\lambda_r-\lambda_i).$$
Moreover, since $C\subseteq (\pi^X_0)^{-1}(Z')$, it follows that
$C'\subseteq (\pi^Y_0)^{-1}(W')$, where $W'=p^{-1}(Z')$ is a proper closed subset of $W_A$. Since $W_A$ has rational singularities,
it follows from Theorem~\ref{thm_old2} that ${\rm codim}(C')>r\lambda_r$, hence
$${\rm codim}(C)={\rm codim}(C')-\sum_{i=1}^r(\lambda_r-\lambda_i)>r\lambda_r-\sum_{i=1}^r(\lambda_r-\lambda_i)=|\lambda|=m,$$
which completes the proof of the theorem.
\end{proof}

\begin{rmk} \label{rmk:KempfDirectImage}
One can give another proof of the ``if'' implication in Theorem \ref{thm2_main}, at least when we know \emph{a priori} that $Z_A$ is reduced,
using Kempf's argument for \cite[Proposition~2]{Kempf}, as follows. 
Recall that the projection $p\colon Y = X \times {\mathbf P}^{r-1} \to X$ onto the first factor induces $W_A \to Z_A$. On $Y$ we have the line bundle $\cO_Y(1)$,
the pull-back of $\cO_{{\mathbf P}^{r-1}}(1)$ via the second projection.
    
 Suppose first only that $W_A$ is a local complete intersection of pure codimension $r$ in $Y$. Consider the $r$ global sections $q_i = \sum_{1 \leq j \leq r} a_{ij} y_j$
 of  $\cO_Y(1)$, for $1 \leq t \leq r$. These give $r$ morphisms $\cO_Y(-1) \to \cO_Y$ and a corresponding Koszul complex 
 $$0 \to K_r=\cO_Y(-r)\overset{\varphi_r}\to \ldots \to K_1=\cO_Y(-1)^{\oplus r}\overset{\varphi_1}\to K_0=\cO_Y\to 0.$$ 
 The assumption on $W_A$ implies that this is an acyclic complex, giving a resolution of
 $\cO_{W_A}$. By breaking the Koszul complex into short exact sequences, taking the corresponding long exact sequences for higher direct images, and using the standard
 computation of cohomology of the projective space, it follows by descending induction on $j$ that
 $R^qp_*{\rm Im}(\varphi_j)=0$ for $1\leq j\leq r$ and $q>j-1$. Using this for $j=1$, we deduce that 
 $R^qp_*\cO_{W_A}=0$ for all $q>0$ and the induced morphism
 $\cO_X\to p_*\cO_{W_A}$ is surjective. 
 
 Suppose now that $W_A$ is a local complete intersection of pure codimension $r$ in $Y$, with rational singularities. 
 This implies that $W_A$ is reduced and, as in the proof of Theorem \ref{thm2_main}, we see that we may assume that $W_A$ is also irreducible.
 This implies that $Z_A$ is irreducible, too. If we assume that $Z_A$ is reduced (for example, this follows using Theorem~\ref{thm1_main}), then we see 
 that $W_A\to Z_A$ is a proper birational morphism of algebraic varieties. What we have seen so far about $R^qp_*\cO_{W_A}$ says that the canonical morphism 
 $\cO_{Z_A}\to {\mathbf R}p_*\cO_{W_A}$ is an isomorphism.
 
  If $\tau\colon V\to W_A$ is a resolution of singularities, then the canonical morphism
 $\cO_{W_A}\to {\mathbf R}\tau_*\cO_V$ is an isomorphism by the assumption on the singularities of $W_A$, hence 
 the morphism
 $$\cO_{Z_A}\to {\mathbf R}(p\circ\tau)_*\cO_V\simeq {\mathbf R}p_*({\mathbf R}\tau_*\cO_V)\simeq {\mathbf R}p_*\cO_{W_A}$$
 is an isomorphism. Since $p\circ \tau$ is a resolution of singularities of $Z_A$, we conclude that $Z_A$ has rational singularities. 
 \end{rmk}

\begin{rmk}\label{rmk_affine_version}
Given the setup in Theorems~\ref{thm1_main} and \ref{thm2_main} (with $s=r$, for simplicity), we may also consider the subscheme $\widetilde{W}_A$ of $\widetilde{Y}=X\times {\mathbf A}^r$ defined by the same equations 
as $W_A$. Since $X\times\{0\}$ has codimension $r$ in $\widetilde{Y}$, we see that $W_A$ is a local complete intersection of pure codimension $r$ in $Y$ if and only if
$\widetilde{W}_A$ is a local complete intersection of pure codimension $r$ in $\widetilde{Y}$. Similarly, we have 
${\rm lct}(\widetilde{Y},\widetilde{W}_A)=r$ if and only if ${\rm lct}(Y,W_A)=r$. 
Indeed, note first that ${\rm lct}(Y,W_A)=r$ if and only if ${\rm lct}(Y',W_A')=r$, where $Y'=X\times \big({\mathbf A}^r\smallsetminus\{0\}\big)$ and $W_A'=\widetilde{W}_A\cap Y'$. 
The ``only if" part of the assertion is then
clear. For the ``if" part,
it is convenient to use the description of the log canonical threshold
in Theorem~\ref{thm_old1}. Note that if $\gamma=(\delta,u)\in \widetilde{Y}_{\infty}\simeq X_{\infty}\times {\mathbf A}^r_{\infty}$ and $u=(u_1,\ldots,u_r)\in {\rm Cont}^p\big(\{0\}\big)$,
then we can write $u_i=t^pv_i$ for $1\leq i\leq r$, such that $v=(v_1,\ldots,v_r)\in Y'_{\infty}$. 
Since the equations defining $\widetilde{W}$ are homogeneous of degree $1$ in the coordinates on ${\mathbf A}^r$, 
we see that $(\delta,u)\in {\rm Cont}^m(\widetilde{W}_A)$ if and only if $(\delta,v)\in {\rm Cont}^{m-p}(W_A')$. This implies that
for every $m$ and $p$, with $p\leq m$, we have
$${\rm codim}\big({\rm Cont}^m(\widetilde{W}_A)\cap {\rm Cont}^p(X\times \{0\})\big)=pr+{\rm codim}\big({\rm Cont}^{m-p}(W_A')\big)\geq pr+r(m-p)=mr,$$
where the inequality follows from the fact that ${\rm lct}(Y',W_A')=r$, by Theorem~\ref{thm_old1}. Another application of the same theorem gives
${\rm lct}(\widetilde{Y},\widetilde{W}_A)\geq r$, and the opposite inequality follows from Proposition~\ref{max_lct}.

We note, however, that $\widetilde{W}_A$ doesn't have rational singularities, unless $Z_A$ is empty (that is, ${\rm det}(A)$ is invertible).  Indeed, $X\times\{0\}$ is an irreducible component of 
$\widetilde{W}_A$ that intersects every irreducible component that dominates an irreducible component of $W_A$. However, the irreducible components of a variety with rational singularities
do not intersect. 
\end{rmk}

\begin{eg}
  Consider a square matrix $A$ whose nonzero entries are homogeneous linear polynomials in ${\mathbf C}[x_1, \dots, x_n]$. In light of Theorem \ref{thm2_main}, it is interesting to characterize those $A$ for which $W_A$ is smooth. It turns out that $W_A$ is smooth if and only if $A$ is $1$-generic. Theorem \ref{thm2_main} thus recovers the fact that hypersurfaces defined by determinants of square $1$-generic matrices have rational singularities. This result is due to Kempf (see the discussion around \cite[Theorem~1]{Eisenbud}). It also recovers Kempf's result that theta divisors of smooth projective curves have rational singularities: by \cite[Theorem~2]{Kempf}, all tangent cones of theta divisors on Jacobians are defined by such determinants.

    Consider a map $\psi\colon U \otimes_{{\mathbf C}} U \to V$, where $U$ and $V$ are ${\mathbf C}$-vector spaces of dimension $r$ and $n$, respectively. After fixing bases 
    $x_1, \dots, x_n$ for $V$ and $b_1, \dots, b_r$ for $U$, this map is represented by an $r \times r$ matrix $A_{\psi} = (a_{ij})$, where each 
    $a_{ij} = \psi(b_i \otimes b_j)$ is a homogeneous linear polynomial in ${\rm Sym}(V)={\mathbf C}[x_1, \dots, x_n]$ (possibly zero). Clearly, giving a square matrix with homogeneous linear entries is 
    equivalent to giving a map $\psi$.
  
Recall that the map $\psi$ is $1$-generic if $\ker \psi$ contains no elementary tensors $b \otimes b^\prime$; we say $A_{\psi}$ is $1$-generic when $\psi$ is $1$-generic. On the matrix side, being $1$-generic is the same as saying $v^{\intercal} A_{\psi} w \neq 0$ for any nonzero column vectors $v,w \in {\mathbf C}^n \smallsetminus \{0\}$. In other words, a matrix is $1$-generic if performing nonzero row and column operations never produces a vanishing entry.
  
We now show that $A_{\psi}$ is $1$-generic if and only if $W_{A_{\psi}} \subseteq Y={\mathbf A}^n \times {\mathbf P}^{r-1}$ is smooth, of codimension $r$ in $Y$. Using homogeneity, we see that $a_{ij} = \sum_{1 \leq k \leq n} \frac{\partial a_{ij}}{x_k} x_k$. Therefore, we have
  \begin{align*} \label{eqn:1-generic-1}
      v^{\intercal} A_{\psi} w \neq 0 
      &\iff v^{\intercal} 
        \begin{bmatrix}
          \sum_{1 \leq j \leq r} \bigg( \sum_{1 \leq k \leq n} \frac{\partial a_{1j}}{\partial x_k} x_k \bigg) w_j \\
          \vdots \\
          \sum_{1 \leq j \leq r} \bigg( \sum_{1 \leq k \leq n} \frac{\partial a_{rj}}{\partial x_k} x_k\bigg) w_j
          \end{bmatrix} \neq 0 \\
        &\iff v^{\intercal}
        \begin{bmatrix}
          \sum_{1 \leq j \leq r} \frac{\partial a_{1j}}{\partial x_1} w_j & \cdots & \sum_{1 \leq j \leq r} \frac{\partial a_{1j}}{\partial x_n} w_j \\
          \vdots & \cdots & \vdots \\
          \sum_{1 \leq j \leq r} \frac{\partial a_{rj}}{\partial x_1} w_j & \cdots & \sum_{1 \leq j \leq r} \frac{\partial a_{rj}}{\partial x_n} w_j
          \end{bmatrix} \neq 0. \nonumber
    \end{align*} 
    The affine cone $\widetilde{W}_{A_{\psi}}\subseteq {\mathbf A}^n \times {\mathbf A}^r$ is cut out by the $r$ quadrics $\sum_{1 \leq j \leq r} a_{tj} y_j$, indexed by $1 \leq t \leq r$. The associated Jacobian matrix is
    \begin{equation*} \label{eqn:1-generic-2}
        \left[ \begin{array}{ccc|ccc}
        a_{11} & \cdots & a_{1r} &\sum_{1 \leq j \leq r} \frac{\partial a_{1j}}{\partial x_1} y_j & \cdots & \sum_{1 \leq j \leq r} \frac{\partial a_{1j}}{\partial x_n} y_j \\
        \vdots & & \vdots & \vdots & & \vdots \\
          a_{r1} & \cdots & a_{rr} & \sum_{1 \leq j \leq r} \frac{\partial a_{nj}}{\partial x_1} y_j & \cdots & \sum_{1 \leq j \leq r} \frac{\partial a_{nj}}{\partial x_n} y_j.
        \end{array} \right]
    = \left[ \begin{array}{c|c} A_{\psi} & B(y)
        \end{array} \right].
    \end{equation*}
    The first (resp. second) block matrix is $A_{\psi}$ (resp. $B(y)$) and comes from taking partial derivatives of the quadrics with respect to the $y_j$ (resp. $x_i$) variables. 
    We have seen that $A_{\psi}$ is 1-generic if and only if $B(w)$ has rank $r$ for every $w\in {\mathbf C}^r\smallsetminus\{0\}$. It follows that if 
    $A_{\psi}$ is $1$-generic, then the singular locus of $\widetilde{W}_{A_{\psi}}$ is contained in ${\mathbf A}^n\times\{0\}$, hence $W_{A_{\psi}}$ is smooth. 
    
    Conversely, if $W_{A_{\psi}}$ is smooth, then every point of $\{0\}\times \big({\mathbf A}^r\smallsetminus\{0\}\big)$ is a smooth point of $\widetilde{W}_{A_{\psi}}$.
    Since the matrix $A_{\psi}$ vanishes at every such point, it follows that the ideal generated by 
    the $r \times r$ minors of $B$ only vanishes at the origin, hence $A_{\psi}$ is $1$-generic. 
    
    Finally, we note that the definition of $1$-generic matrices and its characterization in terms of smoothness of the appropriate incidence variety easily generalize to non-square matrices.
\end{eg}

\begin{rmk}
    Any hypersurface can be defined by a determinant in a trivial way: simply consider a $1 \times 1$ determinant. When $X = {\mathbf A}^n$, every hypersurface can be defined by a determinant in a much less trivial way: it is classical (see, for instance, \cite{HMV}), that given $f \in {\mathbf C}[x_1, \dots, x_n]$, there exists a square matrix $A = (a_{ij})$ such that
 $f={\rm det}(A)$ and every $a_{ij}$ is a polynomial in ${\mathbf C}[x_1, \dots, x_n]$ of degree at most $1$. Note that $A$ is usually quite large and often contains constants and inhomogeneous entries, even if $f$ itself is homogeneous. Therefore, Theorems~\ref{thm1_main} and \ref{thm2_main}
apply to all hypersurfaces in ${\mathbf A}^n$ in a nontrivial way, modulo constructing the matrix $A$.
\end{rmk}

\section{Configuration hypersurfaces}\label{application}

We begin by recalling, following the original \cite{BEK} and the more recent \cite{DSW}, the definition of configuration hypersurfaces. A \emph{configuration} is a nonzero linear subspace $U$ in a finite-dimensional $\C$-vector space 
$V=\C^E$, with a fixed basis $E$. We denote by $V^*$ the dual vector space and by $E^*=\{e^*\mid e\in E\}$ the dual basis. For $S\subseteq E$, we put
$S^*=\{e^*\mid e\in S\}$. 

Given a configuration $U$ as above, the corresponding \emph{configuration matroid} $M_U$ is the matroid on $E$, whose independent sets are those 
$S\subseteq E$ such that the restrictions of the elements of $S^*$ to $U$ are linearly independent. For basic facts about matroids, we refer to \cite{matroids};
however, we will not need any of this in what follows. 

The \emph{Hadamard product} on $V$ is given by $Q_E\colon V\times V\to V$,
$$Q_E\left(\sum_{e\in E}a_ee,\sum_{e\in E}b_ee\right)=\sum_{e\in E}a_eb_ee.$$
Given a configuration $U\subseteq V$, we denote by $Q_U$ the restriction of $Q_E$ to $U\times U$.
Set-theoretically, the \emph{configuration hypersurface} $Z_U\subseteq V^*$ consists of those
$\beta\in V^*$ such that $\beta\circ Q_U$ is degenerate. 

In order to give an explicit equation of $Z_U$, let us order the elements of $E$ as $e_1,\ldots,e_n$, so ${\rm Sym}(V)\simeq \C[x_1,\ldots,x_n]$.
We choose a basis $B$ of $U$, so that with respect to $B$ and $E$, the subspace $U$
is described as the span of the row vectors of an $r\times n$ matrix $D$, where $r=\dim(U)$. Therefore, an equation
of $Z_U$ is given by 
$${\rm det}(A),\quad\text{where}\quad A=D\cdot {\rm diag}(x_1,\ldots,x_n)\cdot D^T.$$
The matrix $A$ is the \emph{Patterson} matrix of the configuration (with respect to the basis $B$). 
As picking a different basis of $U$ changes the determinant of the Patterson matrix by a nonzero scalar, this defining equation depends (up to scalar) only on the configuration 
$U \subseteq \mathbf{C}^E$.
An application of the Cauchy-Binet formula
gives
\begin{equation}\label{eq_Patterson}
{\rm det}(A)=\sum_{I\subseteq E; \#I=r}{\rm det}(D\vert_I)x^I,
\end{equation}
where we denote by $D\vert_I$ the submatrix of $D$ on the columns indexed by $I$ and $x^I=\prod_{i\in I}x_i$.
Note that ${\rm det}(D\vert_I)\neq 0$ if and only if $I$ is a basis of $M_U$, hence ${\rm det}(A)$ is a matroid support polynomial of $M_U$ is the sense of 
\cite[Definition~2.11]{BW}. 

Let ${\mathbf P}(U)$ denote the projective space of lines in $U$.
Set-theoretically, the \emph{configuration incidence variety} $W_U$ is the subset of $V^*\times {\mathbf P}(U)$ consisting of those 
$\big(\beta,[u]\big)$ such that $\beta\circ Q(u,-)$ vanishes on $U$. This incidence variety was first studied by Bloch \cite{Bloch}. In order to give explicit equations for $W_U$, we choose a basis $B$
of $U$ as above. In this case, the ideal defining the subscheme $W_U$ in $V^*\times {\mathbf P}(U)$ is generated by the entries of 
$A\cdot (y_1,\ldots,y_r)^T$, where $y_1,\ldots,y_r$ are the homogeneous coordinates on ${\mathbf P}(U)$
corresponding to the basis $B$. 

\begin{cor}
The configuration incidence variety $W_U$ satisfies 
\begin{equation}\label{eq_cor}
{\rm lct}\big(V^*\times {\mathbf P}(U), W_U)=r.
\end{equation}
In particular, $W_U$ is a local complete intersection of pure codimension $r$. 
If, moreover, the matroid $M_U$ is connected, then the configuration incidence variety $W_U$ has rational singularities.
\end{cor}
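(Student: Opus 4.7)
The plan is to reduce to the case of a connected matroid, in which \cite[Theorem~1.1]{BMW} supplies the rationality of the configuration hypersurface, after which Corollary~\ref{cor} and Theorem~\ref{thm2_main} applied to the square Patterson matrix yield the claims about $W_U$.

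I would start with the direct sum decomposition of matroids. Decomposing $M_U$ into its connected components gives a partition $E = E_1 \sqcup \cdots \sqcup E_k$ and a corresponding direct sum decomposition $U = U_1 \oplus \cdots \oplus U_k$ with $U_i \subseteq \C^{E_i}$ and each $M_{U_i}$ connected. Choosing a basis of $U$ compatible with this decomposition and ordering $E$ accordingly, the matrix $D$ is block diagonal, and hence so is the Patterson matrix: $A = {\rm diag}(A_1, \ldots, A_k)$, where $A_i$ is the Patterson matrix of $U_i$ in the variables indexed by $E_i$. Consequently, $\det(A) = \prod_{i=1}^k \det(A_i)$, and the factors lie in pairwise disjoint sets of variables.

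Since each $M_{U_i}$ is connected, $\det(A_i)$ is irreducible, so by \cite[Theorem~1.1]{BMW}, the configuration hypersurface $Z_{U_i} \subset V_i^*$ has rational singularities; in particular, ${\rm lct}(V_i^*, Z_{U_i}) = 1$ for every $i$. Because the $\det(A_i)$ are in disjoint variables, a product of log resolutions of the individual pairs $(V_i^*, Z_{U_i})$ gives a log resolution of $(V^*, Z_U)$, and the resulting log discrepancy computation immediately yields ${\rm lct}(V^*, Z_U) = \min_i {\rm lct}(V_i^*, Z_{U_i}) = 1$. Applying Corollary~\ref{cor} to the square matrix $A$ now gives (\ref{eq_cor}), and Proposition~\ref{max_lct} then shows that $W_U$ is a local complete intersection of pure codimension $r$.

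If in addition $M_U$ is connected, then $k = 1$ and $Z_U$ itself has rational singularities by \cite[Theorem~1.1]{BMW}. Theorem~\ref{thm2_main}, applied to the square matrix $A$, then implies that $W_U$ has rational singularities. The only real technical point is tracking the block decomposition of the Patterson matrix under the direct sum decomposition of $U$, which requires some care with the basis choice but is otherwise routine; everything else follows by assembling the theorems already at our disposal.
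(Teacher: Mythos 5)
Your proof arrives at the right conclusions, but it takes a noticeably longer route than the paper's for the log canonical threshold computation, and it glosses over a nontrivial step. The paper simply observes, directly from the Cauchy--Binet formula (\ref{eq_Patterson}), that $\det(A)$ has degree $\leq 1$ in each variable, hence is square-free; it then invokes \cite[Lemma~3.3]{BMW} (a soft statement about square-free polynomials) to get ${\rm lct}(V^*,Z_U)=1$. No decomposition into connected components is needed at all for this step.

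Your approach instead decomposes $M_U$ into connected components, uses a claimed direct sum decomposition $U=U_1\oplus\cdots\oplus U_k$, observes $\det(A)=\prod_i\det(A_i)$ in disjoint variables, applies the full strength of \cite[Theorem~1.1]{BMW} to each factor to get rational singularities (hence $\mathrm{lct}=1$) for each piece, and then concatenates via a disjoint-variables log resolution argument. This is valid, but strictly heavier: you invoke a much stronger theorem (\cite[Theorem~1.1]{BMW} rather than \cite[Lemma~3.3]{BMW}) and still need the $\mathrm{lct}$-of-a-product computation. Moreover, two points you treat as ``routine'' deserve a word of justification: (i) the fact that the matroid decomposition $E=E_1\sqcup\cdots\sqcup E_k$ actually induces a decomposition $U=\bigoplus_i(U\cap\C^{E_i})$ is a property of representable matroids, not a purely combinatorial tautology (it follows because $U^*$ is spanned by $\{e^*\vert_U\}_{e\in E}$ and the rank of $E_i$ equals $\dim\,\mathrm{span}\{e^*\vert_U\}_{e\in E_i}$, forcing a direct sum after dualizing); and (ii) you should note explicitly that each $\det(A_i)$ is square-free (again by Cauchy--Binet applied blockwise) before citing \cite[Theorem~1.1]{BMW}, since that theorem requires square-freeness, not just irreducibility. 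Once those gaps are filled, your argument is correct, and the part about rational singularities of $W_U$ in the connected case is identical to the paper's proof.
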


\begin{proof}
Since the polynomial in (\ref{eq_Patterson}) is clearly square-free (in the sense that it is of degree $\leq 1$ with respect to each variable),
it follows from \cite[Lemma~3.3]{BMW} that ${\rm lct}(V^*,Z_U)=1$, and we deduce the formula in (\ref{eq_cor}) from Theorem~\ref{thm1_main}.
The fact that $W_U$ is a local complete intersection of pure codimension $r$ then follows from Proposition~\ref{max_lct}.

If we assume that the matroid $M_U$ is connected, then it follows easily that the polynomial in (\ref{eq_Patterson}) is irreducible (see, for example \cite[Corollary~2.19]{BW}
for a more general statement). Using again the fact that the polynomial is square-free, we deduce from \cite[Theorem~1.1]{BMW} that the hypersurface $Z_U$ has rational singularities. The fact that $W_U$ has rational singularities now follows from
Theorem~\ref{thm2_main}. 
\end{proof}

\begin{rmk}
We may also consider the subscheme $\widetilde{W}_U$ of $V^*\times U$ given by the same equations as $W_U$. It follows from Remark~\ref{rmk_affine_version}
that this is a local complete intersection of pure codimension $r$ in $V^*\times U$ and ${\rm lct}(V^*\times U, \widetilde{W}_U)=r$. However, $\widetilde{W}_U$ never has
rational singularities.
\end{rmk}

\begin{rmk} \label{rmk:charp}
It is shown in \cite{BDSW} that if $M_U$ is connected, then $W_U$ has $F$-rational type (which implies it has rational singularities) and, in fact, it is strongly $F$-regular when 
the configuration is defined over an $F$-finite field of positive characteristic. The argument in loc. cit. utilizes the matroid-theoretic structure in an essential way, hence it does not apply to arbitrary $Z_A$ and $W_A$.
\end{rmk}

\begin{rmk} \label{rmk:ConfigsNotGeneric}
    Most configuration hypersurfaces $Z_U$ do not arise from determinants of $1$-generic Patterson matrices: it is easy to see that a Patterson matrix is $1$-generic if and and only if $U \subseteq V$ 
    contains no two vectors $v$, $w$ whose Hadamard product vanishes.
\end{rmk}

\section*{References}
\begin{biblist}

\bib{BDSW}{article}{
    author={Bath, D.},
    author={Denham, G.},
    author={Schulze, M.},
    author={Walther, U.},
    title={Tropical resolutions of configuration hypersurfaces},
    journal={in preparation},
}

\bib{BMW}{article}{
   author={Bath, D.},
   author={Musta\c t\u a, M.},
   author={Walther, U.},
   title={Singularities of square-free polynomials},
   journal={Int. Math. Res. Not. IMRN},
   date={2025},
   number={15},
   pages={Paper No. rnaf239, 8 pp.},
}

\bib{BW}{article}{
      author={Bath, D.},
      author={Walther, U.},
	title={Matroidal polynomials, their singularities, and
applications to Feynman diagrams},
	journal={preprint arXiv:2404.07885}, 
	date={2024}, 
}

\bib{Bloch}{misc}{
  author = {Bloch, Spencer},
  title = {A note on {H}odge structures associated to graphs},
  note={unpublished note, available at
  \url{https://math.uchicago.edu/~bloch/graph_hs090325.pdf}},
}

\bib{BEK}{article}{
   author={Bloch, S.},
   author={Esnault, H.},
   author={Kreimer, D.},
   title={On motives associated to graph polynomials},
   journal={Comm. Math. Phys.},
   volume={267},
   date={2006},
   number={1},
   pages={181--225},
}

\bib{CDMO}{article}{
   author={Chen, Q.},
   author={Dirks, B.},
   author={Musta\c t\u a, M.},
   author={Olano, S.},
   title={$V$-filtrations and minimal exponents for local complete
   intersections},
   journal={J. Reine Angew. Math.},
   volume={811},
   date={2024},
   pages={219--256},
}

\bib{DSW}{article}{
    author = {Denham, G.},
    author = {Schulze, M.},
    author = {Walther, U.},
     TITLE = {Matroid connectivity and singularities of configuration hypersurfaces},
   JOURNAL = {Lett. Math. Phys.},
   VOLUME = {111},
      YEAR = {2021},
    NUMBER = {11},
    number={1},
   pages={Paper No. 11, 67 pp.},
}

\bib{Docampo}{article}{
   author={Docampo, R.},
   title={Arcs on determinantal varieties},
   journal={Trans. Amer. Math. Soc.},
   volume={365},
   date={2013},
   number={5},
   pages={2241--2269},
}

\bib{Eisenbud}{incollection}{
    AUTHOR = {Eisenbud, David},
     TITLE = {On the resiliency of determinantal ideals},
 BOOKTITLE = {Commutative algebra and combinatorics ({K}yoto, 1985)},
    SERIES = {Adv. Stud. Pure Math.},
    VOLUME = {11},
     PAGES = {29--38},
 PUBLISHER = {North-Holland, Amsterdam},
      YEAR = {1987},

}

\bib{ELM}{article}{
   author={Ein, L.},
   author={Lazarsfeld, R.},
   author={Musta\c t\v a, M.},
   title={Contact loci in arc spaces},
   journal={Compos. Math.},
   volume={140},
   date={2004},
   number={5},
   pages={1229--1244},
}

\bib{HMV}{article}{
    author = {Helton, J.},
    author = {McCullough, S.},
    author = {Vinnikov, V.},
     TITLE = {Noncommutative convexity arises from linear matrix
              inequalities},
   JOURNAL = {J. Funct. Anal.},
    VOLUME = {240},
      YEAR = {2006},
    NUMBER = {1},
     PAGES = {105--191},
}

\bib{Kempf}{article}{
   author={Kempf, G.},
   title={On the geometry of a theorem of Riemann},
   journal={Ann. of Math. (2)},
   volume={98},
   date={1973},
   pages={178--185},
}

\bib{Lazarsfeld}{book}{
       author={Lazarsfeld, R.},
       title={Positivity in algebraic geometry II},  
       series={Ergebnisse der Mathematik und ihrer Grenzgebiete},  
       volume={49},
       publisher={Springer-Verlag, Berlin},
       date={2004},
}      

\bib{Mustata1}{article}{
   author={Musta\c t\u a, M.},
   title={Jet schemes of locally complete intersection canonical
   singularities},
   note={With an appendix by David Eisenbud and Edward Frenkel},
   journal={Invent. Math.},
   volume={145},
   date={2001},
   number={3},
   pages={397--424},
}

\bib{Mustata2}{article}{
   author={Musta\c t\v a, M.},
   title={Singularities of pairs via jet schemes},
   journal={J. Amer. Math. Soc.},
   volume={15},
   date={2002},
   number={3},
   pages={599--615},
}

\bib{matroids}{article}{
   author={Oxley, J.},
   title={Matroid theory},
   conference={
      title={Handbook of the Tutte polynomial and related topics},
   },
   book={
      publisher={Chapman \& Hall/CRC, Boca Raton, FL},
   },
}

\bib{Zhu}{article}{
   author={Zhu, Z.},
   title={Jet schemes and singularities of $W_d^r(C)$ loci},
   journal={Comm. Algebra},
   volume={43},
   date={2015},
   number={8},
   pages={3134--3159},
}

\end{biblist}
\end{document}